\def\={\ = \ }
\def\>{\, > \, }
\def\<{\, < \, }
\def\R{\mathbb{R}}
\def\pvar{x}
\def\dvar{\lambda}
\def\pdvar{\nu}
\def\pspc{X}
\def\dspc{X^*}
\def\pdspc{^*{X}}
\def\sbspc{S}
\def\sbspcp{S^\perp}
\def\psbspc{^\perp{S}}
\def\rtarw{\rightarrow}
\def\linf{\ell_\infty}
\def\lone{\ell_1}
\def\cn{c_0}
\def\Hinf{\mathcal{H}_\infty}
\def\h10{H^1_0}
\def\iso{\simeq}
\title{A Counterexample and Fix\\to a Minimum Distance Duality Theorem}
\author{Michael C. Rotkowitz
\thanks{The author is with the
Institute for Systems Research and the
Department of Electrical and Computer Engineering, 
The University of Maryland, 
College Park MD 20742 USA,
email: \texttt{mcrotk@umd.edu}.}
\thanks{This material is based upon work supported by the National
  Science Foundation under Grant No.~1351674.}
}
\date{}
\renewcommand\footnotemark{}
\begin{document}
\maketitle

\begin{abstract}
We consider dual optimization problems to the fundamental problem of finding the
minimum distance from a point to a subspace.
We provide a counterexample to a theorem which has appeared in the
literature, relating the minimum distance problem to a maximization
problem in the predual space.
The theorem was stated in a series of papers by Zames and Owen in the
early 1990s in conjunction with a non-standard definition, 
which together are 
true, but the theorem is false when assuming standard definitions, as
it would later appear.
Reasons for the failure of this theorem are discussed; in particular,
the fact that the Hahn-Banach Theorem cannot be guaranteed to provide an extension which
is an element of the predual space.
The condition needed to restore the theorem is derived; namely, that the
annihilator of the pre-annihilator return the original subspace of
interest. 
This condition is consistent with the non-standard definition
initially used, and it is further shown to be necessary in a sense.
\end{abstract}

\section{Introduction}

We consider minimum distance problems in real normed vector spaces.
A well-known duality result for these problems is a generalization of
the projection theorem in the dual space.
We consider the validity of similar duality results in the predual space.
This could be of use for problems arising in spaces for which the
predual is much easier to characterize than the dual, including 
functions of total bounded variation $\mathrm{BV}(\Omega)$ \cite{kunisch2004total},
trace-class operators / nuclear operators,
and the Hardy space $\Hinf$ \cite{zames1993duality,djouadi2002optimal}.

After introducing some terminology in Section~\ref{sec:prelims}, we
review the main duality results for minimum distance problems in
Section~\ref{sec:mindist}, and state (as a conjecture) 
a previously stated theorem providing a duality result in the predual space.
We provide counterexamples to this
conjecture in Section~\ref{sec:counter}, discuss reasons why certain
attempted proofs would fail in Section~\ref{sec:discuss}, and then
show how the result can be restored in Section~\ref{sec:fix}.

\section{Preliminaries}
\label{sec:prelims}

We define the main terms and spaces that will be needed.

\begin{defn}
Given a normed space $\pspc$, its \textbf{(normed) dual} is the space of all
bounded linear functionals on $\pspc$, and is denoted by $\dspc$.
\end{defn}

\begin{defn}
Given a normed space $\pspc$, its \textbf{predual} is the normed space 
denoted by $\pdspc$ such that $(\pdspc)^*\iso\pspc$.
\end{defn}
Dual pairings between an element in a normed space and in its dual
space are then denoted by 
$\langle\cdot,\cdot\rangle:\pspc\times\dspc\rtarw\R$.

\begin{defn}
Given a subset of a normed space $\sbspc\subset\pspc$, its \textbf{annihilator},
denoted by \mbox{$\sbspcp\subset\dspc$}, is given by:
\[
\sbspcp~=~\{\dvar\in\dspc~|~\langle\pvar,\dvar\rangle=0~~\forall~\pvar\in\sbspc\},
\] 
and its \textbf{pre-annihilator},
denoted by $\psbspc\subset~\pdspc$, is given by:
\[
\psbspc~=~\{\pdvar\in\pdspc~|~\langle\pdvar,\pvar\rangle=0~~\forall~\pvar\in\sbspc\}.
\] 
\end{defn}

\begin{rem}
This is the standard definition of the pre-annihilator, as given in
\cite{luenberger_1969,megginson2012introduction,bollobas_1999} 
and many other texts. In general, we then have
$\sbspc ~\subset~ ^\perp(\sbspcp) ~\subset~ (\psbspc)^\perp$
\cite{megginson2012introduction}.
The issues addressed in this paper can be viewed as having all stemmed
from the false assumption that these sets are equivalent when $\sbspc$
is a subspace.
\end{rem}

Let $\Z$ represent the set of integers, $\Z_+$ the set of nonnegative
integers, and $\Z_{++}$ the set of positive integers.

For $1\leq p<\infty$, let 
$$\ell_p~=~\bigg\{x:\Z_+\rightarrow\R ~\bigg|~ \sum_{k=0}^\infty ~\abs{x_k}^p <\infty\bigg\}$$
with norm $\norm{x}_p~=~ \big(\sum_{k=0}^\infty ~\abs{x_k}^p\big)^{1/p}$.
Further let
$$\ell_\infty~=~\bigg\{x:\Z_+\rightarrow\R ~\bigg|~ \sup_{k\in\Z_+}~\abs{x_k}<\infty\bigg\}$$
with norm $\norm{x}_\infty~=~\sup_{k\in\Z_+}\abs{x_k}$.
We also define $\cn\subset\linf$ as
\[
\cn~=~\{x\in\ell_{\infty}~|~\lim_{k\rightarrow\infty}x_k=0\}.
\]
The pairings between any sequences that we will consider will be given
by:
\[
\langle\pvar,\dvar\rangle~=~\sum_{k=0}^\infty~\dvar_k~\pvar_k.
\]

\section{Minimum Distance Results}
\label{sec:mindist}

We review the minimum distance problem and its main duality results.

We first state a theorem which relates finding the minimum distance from a
point to a subspace with a maximization problem in the dual
space. This can be thought of as a generalization of the projection
theorem in Hilbert spaces, and is the first of the two main theorems
in \cite[Ch.~5]{luenberger_1969}. The use of $\max$ indicates that
there is a variable for which the $\sup$ is realized.
\begin{thm}
\label{thm:luen1}
Let $\pspc$ be a real normed linear space, let
$\sbspc\subset\pspc$ be a subspace,
and let $y\in\pspc$. Then:
 \[
 \inf_{\pvar\in\sbspc}~\norm{y-\pvar}~=~
  \max_{\substack{\dvar\in\sbspcp\\[2pt] \norm{\dvar}\leq 1}}~\langle y,\dvar\rangle.
 \]
\end{thm}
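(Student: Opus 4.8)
The plan is to prove the two inequalities separately. For the direction
$\inf_{\pvar\in\sbspc}\norm{y-\pvar}\geq\langle y,\dvar\rangle$ for every admissible $\dvar$, I would fix any $\dvar\in\sbspcp$ with $\norm{\dvar}\leq 1$ and any $\pvar\in\sbspc$; then, since $\langle\pvar,\dvar\rangle=0$, one has
$\langle y,\dvar\rangle=\langle y-\pvar,\dvar\rangle\leq\norm{\dvar}\,\norm{y-\pvar}\leq\norm{y-\pvar}$, and taking the infimum over $\pvar\in\sbspc$ on the right and the supremum over $\dvar$ on the left gives ``$\geq$''. This half is elementary and requires no deep tools; it also shows the supremum on the right is finite.

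For the reverse direction, and simultaneously for the assertion that the supremum is attained (justifying the $\max$), I would set $d=\inf_{\pvar\in\sbspc}\norm{y-\pvar}$. If $d=0$ the statement is immediate (take $\dvar=0$), so assume $d>0$; then $y\notin\bar{\sbspc}$, so $\sbspc+\R y$ is a direct sum. On this subspace define the linear functional $f$ by $f(\pvar+\alpha y)=\alpha d$ for $\pvar\in\sbspc$, $\alpha\in\R$; this vanishes on $\sbspc$ and satisfies $f(y)=d$. The key computation is that $\norm{f}=1$ on $\sbspc+\R y$: for $\alpha\neq 0$ we have $\norm{\pvar+\alpha y}=\abs{\alpha}\,\norm{y-(-\pvar/\alpha)}\geq\abs{\alpha}\,d=\abs{f(\pvar+\alpha y)}$, giving $\norm{f}\leq 1$, while choosing $\pvar\in\sbspc$ with $\norm{y-\pvar}$ close to $d$ shows $\norm{f}\geq 1$. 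Then I would invoke the Hahn--Banach theorem to extend $f$ to a functional $\dvar\in\dspc$ with $\norm{\dvar}\leq 1$; by construction $\dvar\in\sbspcp$ and $\langle y,\dvar\rangle=d$, so the supremum is both $\geq d$ and attained.

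The main obstacle here is essentially the verification that the norm of the functional on $\sbspc+\R y$ equals exactly $1$ and, more precisely, that it is not exceeded --- this is where the infimum definition of $d$ is used in an essential way, and it is the only genuinely nontrivial step. Everything else is a routine application of Hahn--Banach. (It is worth flagging, in view of the paper's theme, that Hahn--Banach produces an extension in $\dspc$ only; there is no control over whether $\dvar$ can be taken in a predual, which is precisely the gap exploited later in Section~\ref{sec:counter}.)
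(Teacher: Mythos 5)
Your proof is correct and is essentially the argument the paper relies on: the paper quotes this theorem from Luenberger and, in its discussion section, describes exactly your construction (a unit-norm functional on $[y+S]$ vanishing on $S$, extended by Hahn--Banach to an element of $\sbspcp$). Your parenthetical remark about the extension living only in $\dspc$ is precisely the point the paper exploits later.
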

We now state the other main theorem of \cite[Ch.~5]{luenberger_1969},
which instead relates a minimum distance problem in the dual space
with a maximization problem in the original/primal space. The use of $\min$
similarly indicates that there is a variable for which the $\inf$ is realized.
\begin{thm}
\label{thm:luen2}
Let $\pspc$ be a real normed linear space, let
$\sbspc\subset\pspc$ be a subspace,
and let $\zeta\in\dspc$. Then:
 \[
 \min_{\dvar\in\sbspcp}~\norm{\zeta-\dvar}~=~
  \sup_{\substack{\pvar\in\sbspc\\[2pt] \norm{\pvar}\leq 1}}~\langle\pvar,\zeta\rangle.
 \]
\end{thm}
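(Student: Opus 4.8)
The plan is to reduce everything to a single application of the Hahn--Banach theorem, following the classical template used for the dual-space projection theorem. First I would dispatch the inequality ``$\geq$'', which needs no extension theorem at all: for any $\dvar\in\sbspcp$ and any $\pvar\in\sbspc$ with $\norm{\pvar}\leq1$, the condition $\langle\pvar,\dvar\rangle=0$ gives $\langle\pvar,\zeta\rangle=\langle\pvar,\zeta-\dvar\rangle\leq\norm{\zeta-\dvar}$. Taking the supremum over such $\pvar$ and then the infimum over $\dvar\in\sbspcp$ yields $\inf_{\dvar\in\sbspcp}\norm{\zeta-\dvar}\geq\sup_{\pvar\in\sbspc,\,\norm{\pvar}\leq1}\langle\pvar,\zeta\rangle$.

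For the reverse inequality, and simultaneously for attainment of the infimum, I would pass to the restriction $f=\zeta|_{\sbspc}$, a bounded linear functional on the normed space $\sbspc$ whose norm is exactly $\norm{f}_{\sbspc^*}=\sup_{\pvar\in\sbspc,\,\norm{\pvar}\leq1}\langle\pvar,\zeta\rangle$ --- the right-hand side of the claimed identity, and finite since $\norm{f}_{\sbspc^*}\leq\norm{\zeta}$. By the Hahn--Banach theorem there is $g\in\dspc$ with $g|_{\sbspc}=f$ and $\norm{g}_{\dspc}=\norm{f}_{\sbspc^*}$. Setting $\dvar_0=\zeta-g$, one checks that $\langle\pvar,\dvar_0\rangle=\langle\pvar,\zeta\rangle-f(\pvar)=0$ for every $\pvar\in\sbspc$, so $\dvar_0\in\sbspcp$, while $\norm{\zeta-\dvar_0}=\norm{g}_{\dspc}=\norm{f}_{\sbspc^*}$. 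Thus the infimum over $\sbspcp$ is attained at $\dvar_0$ and is at most the right-hand side; combined with the previous paragraph, this gives the equality, with $\min$ (not merely $\inf$) on the left and $\sup$ on the right, exactly as stated.

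The only genuinely load-bearing step is the Hahn--Banach extension, and it is worth remarking in advance why it causes no difficulty here: one needs to extend a functional defined on a subspace of $\pspc$ to a norm-preserving functional on all of $\pspc$, and the extension $g$ is required only to lie in $\dspc$ --- which is precisely what Hahn--Banach delivers. (This is in pointed contrast to the predual statement examined later, where the natural extension obtained still lies in $\dspc$ but must instead lie in $\pdspc$.) A minor cleanup I would include is the observation that $\sbspcp=(\bar{\sbspc})^\perp$ and that the supremum over $\norm{\pvar}\leq1$ in $\sbspc$ agrees with the one over $\bar{\sbspc}$ by continuity, so one may freely assume $\sbspc$ closed, although the argument above never actually uses closedness.
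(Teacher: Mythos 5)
Your proof is correct and is exactly the classical argument the paper relies on: the paper states this theorem without proof, citing Luenberger, and its Section~\ref{sec:discuss} sketches precisely your restriction-then-Hahn--Banach construction (restrict $\zeta$ to the subspace, extend norm-preservingly, set $\dvar_0=\zeta-g$) when explaining why the predual analogue fails. Your closing observation about where the extension lands is the same point the paper identifies as the crux of the whole matter.
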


This theorem was restated 
in \cite{owen1992robust,owen1993duality,zames1993duality}, 
in a manner which was then repeated 
in several papers
including 
\cite{djouadi2002optimal,djouadil2001multiobjective,djouadi2003optimal,djouadi2004operator,djouadi2004mimo,djouadi2007computation,djouadi2008duality,djouadi_tv_jco,djouadi2014duality}.
The basic idea was that given this relation between a problem in the
dual space and one in the primal space, it could be rewritten as a
relation between a problem in the primal space and one in the predual
space. We now take the theorem from those papers and state it as a conjecture.
\begin{conj}
\label{conj}
Let $\pspc$ be a real normed linear space, let
$\sbspc\subset\pspc$ be a subspace,
and let $y\in\pspc$. Then:
 \[
 \min_{\pvar\in\sbspc}~\norm{y-\pvar}~=~
  \sup_{\substack{\pdvar\in\psbspc\\[2pt]\norm{\pdvar}\leq 1}}~\langle\pdvar,y\rangle.
 \]
\end{conj}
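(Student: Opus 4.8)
The plan is to deduce Conjecture~\ref{conj} from Theorem~\ref{thm:luen2} by interchanging the roles of a space and its predual. Since $\pspc$ is assumed to possess a predual, $y$ may be regarded as an element of $(\pdspc)^*\iso\pspc$, that is, as a bounded linear functional on the normed space $\pdspc$. I would invoke Theorem~\ref{thm:luen2} with $\pdspc$ in the role of the normed linear space (so that its dual is $\pspc$), with the pre-annihilator $\psbspc\subset\pdspc$ in the role of the subspace, and with $y$ in the role of the functional $\zeta$; all the hypotheses are met. This yields
\[
 \min_{\pvar\in(\psbspc)^\perp}~\norm{y-\pvar}~=~
 \sup_{\substack{\pdvar\in\psbspc\\[2pt]\norm{\pdvar}\leq 1}}~\langle\pdvar,y\rangle ,
\]
the right-hand side of which is precisely the right-hand side of the conjecture. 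Hence the conjecture would follow at once once one establishes
\[
 \min_{\pvar\in(\psbspc)^\perp}~\norm{y-\pvar}~=~\min_{\pvar\in\sbspc}~\norm{y-\pvar} .
\]

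For that identity I would argue as follows. Since $\psbspc$ is a subspace of $\pdspc$, its annihilator $(\psbspc)^\perp$ is a norm-closed (indeed weak-$*$ closed) subspace of $(\pdspc)^*\iso\pspc$, and directly from the definitions $\sbspc\subset(\psbspc)^\perp$, so also $\overline{\sbspc}\subset(\psbspc)^\perp$. Because the distance from a point to a subspace equals its distance to the closure of that subspace, this already gives one inequality,
\[
 \min_{\pvar\in(\psbspc)^\perp}~\norm{y-\pvar}~\leq~
 \inf_{\pvar\in\overline{\sbspc}}~\norm{y-\pvar}~=~\inf_{\pvar\in\sbspc}~\norm{y-\pvar} ,
\]
and the min on the left is attained by Theorem~\ref{thm:luen2}. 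The remaining, and only substantive, step is the reverse inequality, and for it it would suffice to prove $(\psbspc)^\perp=\overline{\sbspc}$ --- that passing to the annihilator of the pre-annihilator recovers the closure of $\sbspc$ and nothing larger --- together with closedness of $\sbspc$ if one insists that the outer minimum be attained in $\sbspc$ itself rather than merely in $\overline{\sbspc}$.

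The inclusion $(\psbspc)^\perp\subset\overline{\sbspc}$ is exactly where I expect the argument to fail, and it is the crux of the matter. In general one has only $\sbspc\subset{}^\perp(\sbspcp)=\overline{\sbspc}\subset(\psbspc)^\perp$, where the last set is the weak-$*$ closure of $\sbspc$ in $(\pdspc)^*$, which can be strictly larger than $\overline{\sbspc}$ even when $\sbspc$ is norm-closed; equality $(\psbspc)^\perp=\overline{\sbspc}$ holds precisely when $\sbspc$ is weak-$*$ closed. The same obstruction appears under a direct attack: one would like to invoke the Hahn--Banach theorem, as in the proof of Theorem~\ref{thm:luen1}, to produce a norming functional $\dvar$ with $\norm{\dvar}\leq 1$ and $\langle y,\dvar\rangle$ equal to the minimum distance, but Hahn--Banach only delivers such a $\dvar$ in $\dspc$ and gives no control over whether it lies in the predual $\pdspc\subset\dspc$, i.e.\ whether it is weak-$*$ continuous. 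I therefore expect this line of reasoning not to prove the conjecture as stated, but instead to isolate precisely the hypothesis it is missing, namely $(\psbspc)^\perp=\sbspc$.
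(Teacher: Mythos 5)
Your instinct here is exactly right, and it matches the paper's own treatment: the statement is labelled a \emph{conjecture} precisely because it is false as stated, and the paper's ``proof'' of it is in fact a pair of counterexamples. Your reduction via Theorem~\ref{thm:luen2} applied to $\pdspc$ correctly yields
$\min_{\pvar\in(\psbspc)^\perp}\norm{y-\pvar}=\sup\{\langle\pdvar,y\rangle : \pdvar\in\psbspc,\ \norm{\pdvar}\leq 1\}$
(this is the paper's Theorem~\ref{thm:full}), and you correctly isolate the sole remaining obstruction: the inclusion $(\psbspc)^\perp\subset\overline{\sbspc}$ can fail, because $(\psbspc)^\perp$ is the weak-$*$ closure of $\sbspc$ in $(\pdspc)^*\iso\pspc$, which may strictly contain the norm closure. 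Your identification of the missing hypothesis $(\psbspc)^\perp=\sbspc$ is precisely the paper's Corollary~\ref{cor}, and your Hahn--Banach remark (the extension lands in $\dspc$, not necessarily in $\pdspc$) is verbatim the paper's discussion in Section~\ref{sec:discuss}.

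The one thing your write-up stops short of is converting the suspicion that the argument ``fails'' into a demonstration that the \emph{statement} fails; a proof attempt breaking down does not by itself refute the claim. The paper closes this gap with an explicit counterexample: take $\pspc=\lone$ (so $\pdspc\iso\cn$) and $\sbspc=\{x\in\lone \mid \sum_k x_k=0\}$, a norm-closed subspace. Its annihilator in $\linf$ consists of the constant sequences, so $\psbspc=\sbspc^\perp\cap\cn=\{0\}$ and the supremum side is $0$, while the distance from $y=(1,0,0,\ldots)$ to $\sbspc$ is $1$. Equivalently, in your language: $\sbspc$ is the kernel of the summation functional $\mathbf{1}\in\linf\setminus\cn$, which is not weak-$*$ continuous, so $\sbspc$ is weak-$*$ dense and $(\psbspc)^\perp=\lone\neq\sbspc$. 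Appending such an example would complete your analysis; as it stands, your argument correctly proves the weak-duality inequality and the corrected statement, but only conjectures (correctly) that the original equality can fail.
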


Looking at Theorem~\ref{thm:luen1} along with Conjecture~\ref{conj},
we see that given a problem of finding the minimum distance from a
point to a subspace, we could choose to solve an equivalent
maximization problem in either the dual or the predual space, over
either the annihilator or the pre-annihilator, with the only
difference being that the maximum is only guaranteed to be realized in
the dual space. 

Note that in some of these papers, including 
\cite{owen1992robust,owen1993duality,zames1993duality},
the conjecture was stated along with a non-standard definition of the
pre-annihilator.
This indeed alters its veracity, and is discussed further in
Remark~\ref{other_defn}.

\section{Counterexample}
\label{sec:counter}

We now present a counterexample to the conjecture.

\begin{ex}
\label{ex1}
Let $\pspc=\lone$. It is then
well-established~\cite{luenberger_1969,megginson2012introduction}
that $\dspc\iso\linf$ and that $\pdspc\iso\cn$.

Now consider the subspace $S\subset\pspc$, 
motivated by an exercise in \cite{megginson2012introduction},
given by:
\[
S\=\bigg\{x\in\lone~\bigg|~\sum_{k=0}^\infty~x_k=0\bigg\}.
\]
\begin{align*}
\intertext{Then, considering $x\in S$ with $x_0=-1,~x_i=1$ for some
  $i\in\Z_{++},$ and~ $x_j=0~~\forall~j\notin\{0,i\}$,}
\dvar\in\sbspcp ~&\implies~ \langle\pvar,\dvar\rangle~=~\sum_{k=0}^\infty~\dvar_k~x_k ~=~ \dvar_i-\dvar_0 ~=~ 0\\
~&\implies~ \dvar_i=\dvar_0.
\intertext{Since $i\in\Z_{++}$ was arbitrary, we have $\dvar$ constant
  as a necessary condition for $\dvar\in\sbspcp$. Then, taking any
  $x\in S$, and any $\alpha\in\R$:}
\dvar_k=\alpha~~\forall~k\in\Z_+ ~&\implies~
                                   \langle\pvar,\dvar\rangle~=~\sum_{k=0}^\infty~\dvar_k~x_k
                                   ~=~ \alpha~\sum_{k=0}^\infty~x_k ~=~ 0\\
~&\implies~ \dvar\in\sbspcp.
\end{align*}
So this is a sufficient condition as well, and we have the annihilator
as:
\[
\sbspcp\=\{\dvar\in\linf~|~\dvar_k=\alpha~~\forall~k\in\Z_+ 
\text{ for some }\alpha\in\R\}.
\]
In trying to work out the pre-annihilator, we would follow the same
steps (just with the order of the pairing reversed), and draw the same
conclusion, but we further need $\psbspc\subset\cn$. There is only one
element which is constant, and for which the sequence converges to
zero, and so we have:
\[
\psbspc=\{0\}.
\]

Now, consider $y\in\lone$ such that $y_0=1$ and $y_k=0$ for all
$k\in\Z_{++}$; i.e., $y=\{1,0,0,0,\ldots\}$ 
(actually, any ~$0\neq y\in\lone$ will do).

If $\pvar\in S$, then,
\[
\sum_{k=0}^\infty x_k = 0
~\implies~
\bigg|\sum_{k=1}^\infty x_k\bigg| = \abs{x_0}
~\implies~
\sum_{k=1}^\infty \abs{x_k} \geq \abs{x_0},
\]
and then:
\begin{align*}
\norm{y-x}_{\lone} ~&=~ \abs{1-x_0}+\sum_{k=1}^\infty\abs{x_k}\\
~&\geq~ \abs{1-x_0} + \abs{x_0}\\
~&\geq~ 1.
\end{align*}
This bound is clearly achievable by choosing $x=0$, and so we have:
\[
\min_{\pvar\in\sbspc}~\norm{y-\pvar}~=~1.
\]
But,
\[
\sup_{\substack{\pdvar\in\psbspc\\[2pt]\norm{\pdvar}\leq 1}}\langle\pdvar,y\rangle~=~0,
\]
and so Conjecture~\ref{conj} is false.
\end{ex}

This may raise the question of whether a counterexample can be found
for which the pre-annihilator is not trivial, especially since the
supremum would typically be approached as the norm of the predual
variable approached one.
We thus also provide the following adjusted example.

\begin{ex}
\label{ex2}
Again let $\pspc=\lone$, with $\dspc\iso\linf$ and $\pdspc\iso\cn$.

Now consider the subspace $S\subset\pspc$
given by:
\[
S\=\bigg\{x\in\lone~\bigg|~x_0=0, ~\sum_{k=1}^\infty~x_k=0\bigg\}.
\]
\begin{align*}
\intertext{By similar arguments, we find that the annihilator is:}
\sbspcp~&=~\{\dvar\in\linf~|~\dvar_k=\alpha~~\forall~k\in\Z_{++} 
\text{ for some }\alpha\in\R\};\\
\intertext{that is, the first element may be any real number, and it
  must be some constant thereafter, and that the pre-annihilator is:}
\psbspc~&=~\{\dvar\in\linf~|~\dvar_k=0~~\forall~k\in\Z_{++} \};
\intertext{that is, the first element may be any real number, and it
          must be zero thereafter.}
\end{align*}
We then consider $y\in\lone$ such that $y_0=1, ~y_1=1$ and $y_k=0$ for all
$2\leq k\in\Z_{+}$; i.e., $y=\{1,1,0,0,0,\ldots\}$. 
Then, if $x\in\sbspc$:
\begin{align*}
\norm{y-x}_{\lone} ~&=~\abs{1-x_0}+\abs{1-x_1}+\sum_{k=2}^\infty\abs{x_k}\\
~&=~1+\abs{1-x_1}+\sum_{k=2}^\infty\abs{x_k}\\
~&\geq~1+\abs{1-x_1} + \abs{x_1}\\
~&\geq~ 2,
\end{align*}
and so, since this bound can also be achieved with $x=0$,
\[
\min_{\pvar\in\sbspc}~\norm{y-\pvar}~=~2.
\]
But,
\[
\sup_{\substack{\pdvar\in\psbspc\\[2pt]\norm{\pdvar}\leq 1}}\langle\pdvar,y\rangle
~=~\sup_{\norm{\pdvar}\leq 1}\pdvar_0
~=~1.
\]
\end{ex} 

\subsection{Discussion}
\label{sec:discuss}

We briefly discuss where one would fail if attempting to prove the
conjecture
by following steps similar to those in the proofs of
Theorem~\ref{thm:luen1} or Theorem~\ref{thm:luen2}.

In either case, the same steps can be used to establish that the
minimization will result in a value greater than or equal to that
of the maximization, and the problem arises when trying to
establish that a particular variable exists for one which will achieve
the value of the other, which would then establish the equality.

In the proof of Theorem~\ref{thm:luen1}, a linear functional of unit
norm which vanishes on $S$ is defined
on the subspace $[y+S]=\{\alpha y+x~|~\alpha\in\R,~x\in S\}$,
which is then extended to the entire space $\pspc$ via the Hahn-Banach
theorem. This provides a $\dvar\in\sbspcp$ of unit norm such that 
$\langle y,\dvar\rangle=\inf_{x\in\sbspc}\norm{y-x}$, establishing the
equality. The unrecoverable problem when trying to apply the framework
of this proof to the conjecture, is that the Hahn-Banach theorem
provides an extension to a functional on the primal space, and thus to
an element of the dual space, but cannot be guaranteed to provide an
extension to an element of the predual space.

Suppose we take the value of the supremum to be $m$, and instead try
to adapt the framework of the proof of Theorem~\ref{thm:luen2} to
prove the conjecture.
We can define $\tilde{y}:\psbspc\rtarw\R$ by restriction $\tilde{y}=y|_{\psbspc}$,
such that $\norm{\tilde{y}}=m$.
We then can indeed use the Hahn-Banach theorem 
to extend this to $\check{y}:\pdspc\rtarw\R$, 
i.e., to $\check{y}\in\pspc$,
with $\norm{\check{y}}=m$,
and with $\langle\pdvar,\check{y}\rangle=\langle\pdvar,y\rangle$ 
  if $\pdvar\in\psbspc$.
If we then let $\hat{x}=y-\check{y}$, we get
$\norm{y-\hat{x}}=\norm{\check{y}}=m$ as desired.
The problem then arises when trying to verify that $\hat{x}\in\sbspc$.
Given an arbitrary $\pdvar\in\psbspc$,  this implies that
$\langle\pdvar,\hat{x}\rangle  = \langle\pdvar,y-\check{y}\rangle =0$,
but this only establishes that
$\hat{x}\in(\psbspc)^\perp$, which in general is a superset of $\sbspc$.

\section{Fix}
\label{sec:fix}

We now state a few versions of the fixed duality relation.

\begin{thm}
\label{thm:full}
Let ~$\pspc$ be a real normed linear space, let
~$\sbspc\subset\pspc$ be a subspace,
and let ~$y\in\pspc$. Then:
 \[
 \inf_{\pvar\in\sbspc}~\norm{y-\pvar}~\geq~
 \min_{\pvar\in(\psbspc)^\perp}~\norm{y-\pvar}~=~
  \sup_{\substack{\pdvar\in\psbspc\\[2pt]\norm{\pdvar}\leq 1}}\langle\pdvar,y\rangle.
 \]
\end{thm}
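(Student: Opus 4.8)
The plan is to prove the statement in two pieces: the inequality on the left, and the equality on the right. For the equality, the key observation is that $\psbspc$ is simultaneously the pre-annihilator of $\sbspc$ and the pre-annihilator of $(\psbspc)^\perp$; equivalently, $\sbspc$ and $(\psbspc)^\perp$ have the same pre-annihilator in $\pdspc$. This is because $\sbspc \subset (\psbspc)^\perp$ gives $^\perp((\psbspc)^\perp) \subset \psbspc$ immediately, while the reverse inclusion follows since any $\pdvar \in \psbspc$ annihilates every element of $(\psbspc)^\perp$ by the very definition of that set. So I would first establish $^\perp((\psbspc)^\perp) = \psbspc$, and then apply the already-known duality Theorem~\ref{thm:luen2} — but in the predual pairing, with the roles reversed. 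Concretely, take $\pspc$ in Theorem~\ref{thm:luen2} to be $\pdspc$, whose dual is $\pspc$; take the subspace there to be $\psbspc \subset \pdspc$; and take the functional $\zeta \in (\pdspc)^* \iso \pspc$ to be $y$. Theorem~\ref{thm:luen2} then reads
\[
\min_{\pvar\in(\psbspc)^\perp}~\norm{y-\pvar}~=~
\sup_{\substack{\pdvar\in\psbspc\\[2pt]\norm{\pdvar}\leq 1}}\langle\pdvar,y\rangle,
\]
since the annihilator of $\psbspc$ inside $(\pdspc)^* \iso \pspc$ is exactly $(\psbspc)^\perp$. This gives both the claimed equality and, for free, the fact that the minimum on the middle term is attained (hence $\min$, not $\inf$).

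The left inequality is then essentially immediate: since $\sbspc \subset (\psbspc)^\perp$, infimizing $\norm{y-\pvar}$ over the smaller set $\sbspc$ can only give a value that is at least as large as infimizing over the larger set $(\psbspc)^\perp$. Combining this monotonicity with the equality just derived yields the full chain. I should also double-check that the attainment claim transfers correctly — the $\min$ in the middle term is justified by Theorem~\ref{thm:luen2}'s own $\min$, which holds because the annihilator $(\psbspc)^\perp$ is weak-* closed in the dual of $\pdspc$ and the standard argument produces a minimizer; since we are invoking Theorem~\ref{thm:luen2} as a black box, this is automatic.

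The only real subtlety — and the step I would be most careful about — is the bookkeeping in the substitution $\pspc \rightsquigarrow \pdspc$: one must verify that under the identification $(\pdspc)^* \iso \pspc$, the pairing $\langle \pdvar, y\rangle$ used in the conjecture/theorem really is the same pairing that appears in Theorem~\ref{thm:luen2} applied to $\pdspc$, and that ``the annihilator of $\psbspc$ in $(\pdspc)^*$'' is literally $(\psbspc)^\perp$ as defined in Section~\ref{sec:prelims}. Both are true by unwinding definitions — the annihilator of $\psbspc$ consists of all $\pvar \in \pspc$ with $\langle \pdvar, \pvar\rangle = 0$ for every $\pdvar \in \psbspc$, which is exactly $(\psbspc)^\perp$ — but it is worth stating explicitly so the reader sees that no new analysis is needed beyond Theorem~\ref{thm:luen2}. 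Everything else is routine.
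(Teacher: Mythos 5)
Your proof is correct and follows essentially the same route as the paper: the left inequality from the inclusion $\sbspc\subset(\psbspc)^\perp$, and the equality by applying Theorem~\ref{thm:luen2} with $\pdspc$ playing the role of the primal space, $\psbspc$ as the subspace, and $y\in\pspc\iso(\pdspc)^*$ as the functional, so that the annihilator appearing there is exactly $(\psbspc)^\perp$. The paper states this in one line; your only extra ingredient, the identity $^\perp\big((\psbspc)^\perp\big)=\psbspc$, is true but not actually needed for the argument.
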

\begin{proof}
The inequality follows immediately from the fact that 
$\sbspc\subset(\psbspc)^\perp$. 
The equality follows immediately from Theorem~\ref{thm:luen2}.
\end{proof}
This has the following obvious and more useful corollary.
\begin{cor}
\label{cor}
Let ~$\pspc$ be a real normed linear space, let
~$\sbspc\subset\pspc$ be a subspace
such that $(\psbspc)^\perp=\sbspc$,
and let ~$y\in\pspc$. Then:
 \[
 \min_{\pvar\in\sbspc}~\norm{y-\pvar}~=~
  \sup_{\substack{\pdvar\in\psbspc\\[2pt]\norm{\pdvar}\leq 1}}\langle\pdvar,y\rangle.
 \]
\end{cor}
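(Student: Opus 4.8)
The plan is to derive Corollary~\ref{cor} directly from Theorem~\ref{thm:full}. The key observation is that the corollary's hypothesis, $(\psbspc)^\perp = \sbspc$, is precisely what collapses the chain of relations in Theorem~\ref{thm:full} into a single equality. So first I would invoke Theorem~\ref{thm:full} with the same $\pspc$, $\sbspc$, and $y$, which gives
\[
\inf_{\pvar\in\sbspc}~\norm{y-\pvar}~\geq~
\min_{\pvar\in(\psbspc)^\perp}~\norm{y-\pvar}~=~
\sup_{\substack{\pdvar\in\psbspc\\[2pt]\norm{\pdvar}\leq 1}}\langle\pdvar,y\rangle.
\]
Then I would substitute $(\psbspc)^\perp = \sbspc$ into the middle term. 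This turns the left inequality into $\inf_{\pvar\in\sbspc}\norm{y-\pvar} \geq \min_{\pvar\in\sbspc}\norm{y-\pvar}$, which together with the trivial reverse inequality (an infimum over a set is at most any value attained, and the right-hand side is attained since Theorem~\ref{thm:luen2} furnishes a minimizer in $(\psbspc)^\perp = \sbspc$) forces equality throughout. In particular the infimum over $\sbspc$ is actually attained, justifying the use of $\min$ in the statement, and it equals the supremum on the right.

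There is essentially no obstacle here: the entire content of the fix has already been isolated in Theorem~\ref{thm:full} and in the remark that $\sbspc \subset {}^\perp(\sbspcp) \subset (\psbspc)^\perp$ in general. The only mild subtlety worth spelling out is the attainment claim — that the $\inf$ on the left is a genuine $\min$ — which follows because Theorem~\ref{thm:luen2}, applied to $\sbspcp$ and $\zeta = y$ viewed in the dual of $\pdspc$, guarantees a minimizer of $\norm{y-\pvar}$ over $(\psbspc)^\perp$, and under the hypothesis this minimizer already lies in $\sbspc$. Hence I would phrase the proof in two short sentences: one citing Theorem~\ref{thm:full} and the hypothesis to get the equality of all three quantities, and one noting that the minimizer provided there lies in $\sbspc$, so the $\inf$ is attained and may be written as $\min$.

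I expect the write-up to be only two or three lines long. If anything needs care it is notational consistency — making sure that the pairing $\langle\pdvar,y\rangle$ is read via the identification $(\pdspc)^* \iso \pspc$ so that Theorem~\ref{thm:luen2} applies with $\pdspc$ playing the role of the primal space and $\pspc$ playing the role of its dual — but since the excerpt has already set up exactly this correspondence in Section~\ref{sec:prelims}, no new work is required.
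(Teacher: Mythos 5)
Your proposal is correct and matches the paper's approach: the paper presents this as an immediate corollary of Theorem~\ref{thm:full}, obtained by substituting $(\psbspc)^\perp=\sbspc$ into the middle term, with attainment of the minimum already built into that theorem. Your extra discussion of the reverse inequality is harmless but unnecessary, since once the substitution is made the first and second quantities in the chain are literally the same.
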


\begin{rem}
This condition is later verified to hold for the subspace of interest
in many of the papers which used the conjecture, 
including \cite{owen1992robust,owen1993duality,zames1993duality}, 
so the main results therein are unaffected.
The condition should obviously be checked for other papers which
used the conjecture as well.
\end{rem}

\begin{rem}
\label{other_defn}
Occasionally, the pre-annihilator has been defined as the set
$\hspace{1pt}\psbspc$ for which $(\psbspc)^\perp\iso\sbspc$.
This includes the initial series of papers with the conjecture
\cite{owen1992robust,owen1993duality,zames1993duality}, 
as well as
\cite{djouadil2001multiobjective,djouadi2003optimal,djouadi2004mimo,djouadi2008duality,djouadi_tv_jco}.
While this does avoid the problem with the conjecture, 
and ultimately makes it equivalent to Corollary~\ref{cor},
it is not the standard definition, 
and as we can see from the counterexamples, 
it is not always commensurate with the standard definition.
In Example~\ref{ex1}, for instance, $(\psbspc)^\perp\iso\pspc$.
\end{rem}

\begin{rem}
Note that closure is not enough to guarantee that this condition will
hold, as is already evident from the counterexamples, in which $\sbspc$
was indeed a closed subspace. It follows from $\sbspc$ being a closed
subspace that $\hspace{1pt}\sbspc=\hspace{1pt}^\perp(\sbspcp)$ 
\cite[p.~118]{luenberger_1969}, but in
general we still have $\hspace{1pt}\sbspc\subset(\psbspc)^\perp$.
\end{rem}

Assuming closure does allow our condition to become necessary as well as
sufficient for this strong duality to hold for any point, as stated in
our final theorem.
\begin{thm}
Let ~$\pspc$ be a real normed linear space, and let
~$\sbspc\subset\pspc$ be a closed subspace.
Then:
 \[
 \min_{\pvar\in\sbspc}~\norm{y-\pvar}~=~
  \sup_{\substack{\pdvar\in\psbspc\\[2pt]\norm{\pdvar}\leq 1}}\langle\pdvar,y\rangle
 \]
for all ~$y\in\pspc$, if and only if ~$(\psbspc)^\perp=\sbspc$.
\end{thm}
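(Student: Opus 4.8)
The plan is to treat the two implications separately, noting at the outset that the substance is entirely in the necessity (``only if'') direction. The sufficiency direction is precisely Corollary~\ref{cor}: if $(\psbspc)^\perp = \sbspc$, then $\min_{\pvar\in\sbspc}\norm{y-\pvar} = \sup_{\pdvar\in\psbspc,\,\norm{\pdvar}\leq1}\langle\pdvar,y\rangle$ for every $y\in\pspc$, and the closedness hypothesis is not even needed there (any set of the form $(\psbspc)^\perp$ is an annihilator, hence norm-closed, so $(\psbspc)^\perp=\sbspc$ already forces $\sbspc$ to be closed).

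For the necessity direction I would establish the one missing set inclusion. One always has $\sbspc\subset(\psbspc)^\perp$ (recorded in the remark following the definition of the pre-annihilator), so it suffices to prove $(\psbspc)^\perp\subset\sbspc$. Fix an arbitrary $y\in(\psbspc)^\perp$. By definition of the annihilator, $\langle\pdvar,y\rangle = 0$ for every $\pdvar\in\psbspc$, and in particular for every such $\pdvar$ with $\norm{\pdvar}\leq1$; hence
\[
\sup_{\substack{\pdvar\in\psbspc\\[2pt]\norm{\pdvar}\leq1}}\langle\pdvar,y\rangle \= 0.
\]
Applying the assumed duality equality at this particular $y$ yields $\min_{\pvar\in\sbspc}\norm{y-\pvar} = 0$, i.e.\ $\operatorname{dist}(y,\sbspc)=0$. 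Since $\sbspc$ is closed, this forces $y\in\sbspc$. As $y\in(\psbspc)^\perp$ was arbitrary, $(\psbspc)^\perp\subset\sbspc$, and combined with the reverse inclusion we obtain $(\psbspc)^\perp=\sbspc$, as claimed.

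There is no real obstacle here; the only point worth care is recognizing that it is the closedness of $\sbspc$ (equivalently $\sbspc=\bar{\sbspc}$) that upgrades ``$\operatorname{dist}(y,\sbspc)=0$'' to ``$y\in\sbspc$'' — which is exactly why the preceding remark can stress that closedness alone does \emph{not} make the duality hold, while closedness is nonetheless the right hypothesis under which the condition $(\psbspc)^\perp=\sbspc$ becomes necessary. (Alternatively, one may lean on the paper's convention that writing $\min$ asserts attainment, in which case the same $y$ directly furnishes some $\pvar\in\sbspc$ with $\norm{y-\pvar}=0$; I would nonetheless present the closedness version, since it matches the theorem's stated hypotheses and does not depend on how one reads the symbol $\min$.)
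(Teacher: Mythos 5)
Your proof is correct and follows essentially the same route as the paper's: the sufficiency is Corollary~\ref{cor}, and the necessity comes from evaluating the duality at a point $y\in(\psbspc)^\perp$, where the supremum is $0$, and using closedness of $\sbspc$ to upgrade $\operatorname{dist}(y,\sbspc)=0$ to $y\in\sbspc$. The only cosmetic difference is that you argue directly (every $y\in(\psbspc)^\perp$ lands in $\sbspc$) while the paper argues by contradiction from a hypothetical $y\in(\psbspc)^\perp\setminus\sbspc$, and you compute the supremum from the definition rather than citing Theorem~\ref{thm:full}; both are sound.
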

\begin{proof}
Sufficiency follows from Corollary~\ref{cor}.

Now assume that $\sbspc\subset (\psbspc)^\perp$ is a proper subset, and
choose $y\in (\psbspc)^\perp \setminus \sbspc$.
It follows from Theorem~\ref{thm:full} that
\[
  \sup_{\substack{\pdvar\in\psbspc\\[2pt]\norm{\pdvar}\leq 1}}\langle\pdvar,y\rangle
\=  \min_{\pvar\in(\psbspc)^\perp}~\norm{y-\pvar}
\=0.
\]
If we had $\inf_{x\in\sbspc}\norm{y-x}=0$, then for any $\eps>0$,
there would exist $x\in\sbspc$ such that \mbox{$\norm{y-x}\leq\eps$},
making $y$ a limit point of $\sbspc$, 
and since $\sbspc$ is closed 
this gives $y\in\sbspc$,
a contradiction.
\end{proof}

\bibliographystyle{abbrv}
\bibliography{/Users/mcrotk/Documents/latex_files/rock}

\begin{thebibliography}{10}

\bibitem{bollobas_1999}
B.~Bollob{\'a}s.
\newblock {\em Linear Analysis: An Introductory Course}.
\newblock Cambridge University Press, second edition, 1999.

\bibitem{djouadi2008duality}
S.~Djouadi and J.~Birdwell.
\newblock Duality theory of the optimal two-block {$H^\infty$} problem.
\newblock {\em International Journal of Robust and Nonlinear Control},
  18(12):1302--1316, 2008.

\bibitem{djouadi_tv_jco}
S.~Djouadi and C.~Charalambous.
\newblock Time-varying optimal disturbance minimization in the presence of
  plant uncertainty.
\newblock {\em SIAM Journal on Control and Optimization}, 48(5):3354--3367,
  2013.

\bibitem{djouadil2001multiobjective}
S.~Djouadi, C.~Charalambous, and D.~Repperger.
\newblock On multiobjective {$H^2/H^\infty$} optimal control.
\newblock In {\em Proc. American Control Conference}, pages 4091--4096, 2001.

\bibitem{djouadi2003optimal}
S.~M. Djouadi.
\newblock Optimal robust disturbance attenuation for continuous time-varying
  systems.
\newblock {\em International Journal of Robust and Nonlinear Control},
  13(13):1181--1193, 2003.

\bibitem{djouadi2004mimo}
S.~M. Djouadi.
\newblock {MIMO} disturbance and plant uncertainty attenuation by feedback.
\newblock {\em IEEE Transactions on Automatic Control}, 49(12):2099--2112,
  2004.

\bibitem{djouadi2004operator}
S.~M. Djouadi.
\newblock Operator theoretic approach to the optimal two-disk problem.
\newblock {\em IEEE Transactions on Automatic Control}, 49(10):1607--1622,
  2004.

\bibitem{djouadi2014duality}
S.~M. Djouadi and J.~Dong.
\newblock Duality of the optimal distributed control for spatially invariant
  systems.
\newblock In {\em Proc. American Control Conference}, pages 2214--2219, 2014.

\bibitem{djouadi2007computation}
S.~M. Djouadi and Y.~Li.
\newblock On the computation of the gap metric for {LTV} systems.
\newblock {\em Systems and Control Letters}, 56(11):753--758, 2007.

\bibitem{djouadi2002optimal}
S.~M. Djouadi and G.~Zames.
\newblock On optimal robust disturbance attenuation.
\newblock {\em Systems and Control Letters}, 46(5):343--351, 2002.

\bibitem{kunisch2004total}
K.~Kunisch and M.~Hinterm{\"u}ller.
\newblock Total bounded variation regularization as a bilaterally constrained
  optimization problem.
\newblock {\em SIAM Journal on Applied Mathematics}, 64(4):1311--1333, 2004.

\bibitem{luenberger_1969}
D.~G. Luenberger.
\newblock {\em Optimization by Vector Space Methods}.
\newblock John Wiley and Sons, Inc., 1969.

\bibitem{megginson2012introduction}
R.~E. Megginson.
\newblock {\em An introduction to Banach space theory}, volume 183.
\newblock Springer Science \& Business Media, 2012.

\bibitem{owen1992robust}
J.~Owen and G.~Zames.
\newblock Robust {$H^\infty$} disturbance minimization by duality.
\newblock {\em Systems and Control Letters}, 19(4):255--263, 1992.

\bibitem{owen1993duality}
J.~Owen and G.~Zames.
\newblock Duality theory of robust disturbance attenuation.
\newblock {\em Automatica}, 29(3):695--705, 1993.

\bibitem{zames1993duality}
G.~Zames and J.~Owen.
\newblock Duality theory for {MIMO} robust disturbance rejection.
\newblock {\em IEEE Transactions on Automatic Control}, 38(5):743--752, 1993.

\end{thebibliography}
\end{document}